\newcommand{\Names}{Huai-An Diao and Tong-Yu Zhou}
\newcommand{\Title}{Linearized estimate of the backward error for the equality constrained indefinite least squares problem}
\newcommand{\vect}{{\sf{vec}}}
\newcommand{\rt}{{\top }}
\def \ILSE {    {\tt {ILSE}  }  }
\newcommand {\R}        {{\mathbb R}}
\newcommand {\rank}     {\mathop{\rm rank}\nolimits}
\begin{document}

\bibliographystyle{plain}

\setcounter{page}{1}

\thispagestyle{empty}

 \title{\Title
 }

\author{
Huai-An Diao\thanks{School of Mathematics and Statistics,
Northeast Normal University, 
No. 5268 Renmin Street, Chang Chun 130024,
P.R. China.(hadiao@nenu.edu.cn). Supported by the Fundamental Research Funds for the Central Universities under grant 2412017FZ007.}
\and
Tong-Yu Zhou\thanks{School of Mathematics and Statistics,
Northeast Normal University, 
No. 5268 Renmin Street, Chang Chun 130024,
P.R. China. Current address: Shenyang No. 108 Middle School, 
Taiyuan North Road No. 14,
Shenyang  110001, P.R. China.(183051056@qq.com).}
}

\markboth{\Names}{\Title}

\maketitle

\begin{abstract}
In this note,
we concentrate on the backward error  of the  equality constrained indefinite least squares problem. For the normwise backward error  of the  equality constrained indefinite least square problem, we adopt the linearization method to derive the tight estimate for the exact backward normwise error. The numerical examples show that the linearization  estimate is effective for the normwise backward errors.
\end{abstract}

\begin{keywords}
Indefinite least squares, the equality constrained indefinite least squares problem, normwise backward error, linearization estimate.
\end{keywords}
\begin{AMS}
65F99,  65G99.
\end{AMS}




\section{Introduction}
\setcounter{equation}{0}

The indefinite least squares (ILS) problem \cite{5.0,1.0} is given by:
\begin{equation}\label{eq:ILS}
 \mbox{ILS}:\qquad\min_{x}(b-Ax)^\top\Sigma_{pq}(b-Ax),
\end{equation}
where  $A^\rt$ is the transpose of $A$,  $ A\in
\R^{m\times n},\, b\in
\R^{m},\, m\geq n $ and the signature matrix
\begin{equation}\label{eq:J}
\Sigma_{pq}=\left( \begin{matrix}
I_{p}&0\\0&-I_{q} \end{matrix} \right),\quad
 p+q=m.
\end{equation}
The ILS \eqref{eq:ILS} has applications in the total least squares problem \cite{30.0} and $H^\infty$-smoothing in  optimization  \cite{Hassibi96linearestimation,Sayed96inertiaproperties} see references and therein. The equality constrain indefinite linear least square problem (ILSE) was first proposed by Bojanczyk et al. in \cite{Higham2003ILSE}, which is a generalization of ILS. Suppose $ A\in \R^{m\times n},\,b\in \R^{m},\, B\in \R^{s\times n} ,\,d \in \R ^{s},\, m\geq n  $, and the signature matrix $\Sigma_{pq}$ is defined by \eqref{eq:J}. The ILSE has the form \begin{equation}\label{eq:ILSE}
 \mbox{ILSE}:\qquad \min_{x}(b-Ax)^{\top }\Sigma_{pq}(b-Ax)\quad \mbox{ subject to }\quad Bx=d.
\end{equation}
The existence  and uniqueness of the solution to ILSE is given in \cite{Higham2003ILSE}, i.e., 
  \begin{equation}\label{eq:condition}
  	\rank(B)=s, \,  x^\top (A^\top \Sigma_{pq}A)x>0,
  \end{equation}
where $x\in \mathcal{N}(B)$ and $\mathcal{N}(B)$  denotes the null space of $B$. The rank condition guarantees there exists a solution to the equality constrain in \eqref{eq:ILSE}, while the second one in \eqref{eq:condition}, which means that  $A^\top \Sigma_{pq}A$ is positive definite on ${\cal N}(B)$, ensures that the uniqueness of  a solution to the ILSE problem. When \eqref{eq:condition} is satisfied, the uniques solution $x$ to the ILSE problem \eqref{eq:ILSE} can be determined by the following normal equation 
  \begin{equation}\label{eq:normal}
  	 A^{\top }\Sigma_{pq}(b-Ax)=B^{\top }\xi, \, Bx=d,
  \end{equation}
  where $ \xi $ is a vector of Lagrange multipliers. On the other hand, the augmented system also defines the unique solution $x$ as follows
\begin{equation}\label{eq:aug}
{\cal A} {\bf x} :=\begin{bmatrix} 0& 0&B\cr
0&\Sigma_{pq}& A \cr
B^\rt & A^\rt& 0\end{bmatrix}\begin{bmatrix} \lambda\cr s\cr x\end{bmatrix}=\begin{bmatrix}
  d\cr b\cr 0
\end{bmatrix}:=\bf b,
\end{equation}
where $s=\Sigma_{pq}r$, $r$ is the residual vector $r=b-Ax$ and $\lambda=-\xi $. As pointed in \cite{Higham2003ILSE}, when \eqref{eq:condition} holds, the coefficient matrix $\cal A$ in \eqref{eq:aug} is invertible. For the numerical algorithms and theory for  ILSE, we refer to the papers \cite{MastronardiVanDooren2014BIT,Liu2010ILSEsolver,LiuWang2010ILSE,Mastronardi2015IMA} and etc.

Backward error analysis is important in numerical linear algebra, which can help us to examine the stability of numerical algorithms in matrix computation. Moreover, backward error can be used as the basis of effective stopping criteria for the iterative method for large scale problems. The concept of backward error can be traced to Wilkinson and others, see  \cite[Page 33]{Higham2002Book} for details. Many researchers had concentrated on the backward error analysis for the linear least squares problem \cite{Stewart,Higham,Walden,Karlson,Grcar03optimalsensitivity,Grcar2007}, the scale total least squares (STLS) problem \cite{Chang}, and the equality constrained least squares (LSE) problem and the least squares problem over a sphere (LSS)~\cite{Cox,Malyshev}. Since the formulae  and bounds for backward errors for least squares problems are expensive to evaluate, the linearization estimate for them was proposed; see  for \cite{Chang,Grcar03optimalsensitivity,HighamHigham1992BackCondStr,LiuXinguoNLAA2012} and references therein. To our best knowledge, there are no works on the normwise backward error for ILSE. In this paper, we will introduce the normwise backward error for ILSE and derive its linearization estimate.

The paper is organized as follows. We define the normwise backward error for ILSE and derive its linearization estimate  in Section \ref{sec:backward error}. We do some numerical examples to show the effectiveness of the proposed  linearization estimate for the normwise backward error in Section \ref{sec:nume ex}. At end, in Section \ref{sec:con},  concluding remarks are drawn.

\section{Main results}\label{sec:backward error}

In this section, we will focus on the linearization estimate for the normwise backward error for ILSE. Assume that we have the computed solution $y $  to  \eqref{eq:ILSE}. There exits matrices and vectors $E,\, F,\, f$  and $g $, which are the perturbations on $A, \, B,\, b$ and $d $ respectively, such that the computed solution $y $  is the exact solution of the following  perturbed ILSE problem
\begin{equation}\label{eq:pert1}
\min_{z}(b+f-(A+E)z)^{\top }\Sigma_{pq}(b+f-(A+E)z),\quad \mbox{subject to }\quad (B+F)y=d+g. 
\end{equation}
There may have many possible perturbations satisfying \eqref{eq:pert1}. Thus the following perturbation set 
$$
S _{\tt ILSE}(y)=\left\{(E,f,F,g)\, |\, (A+E)^{\top }\Sigma_{pq}(b+f-(A+E)y)=(B+F)^{\top }\xi,\, (B+F)y=d+g\right\},
$$
is introduced,  where $\xi$ is the vector given in \eqref{eq:normal}. Therefore the normwise backward error for  $y$ can be defined as follows:
\begin{equation}\label{eq:back}
\mu_{\ILSE}=\min\left\|\begin{bmatrix}
E & \theta_{1} f\\
\theta_{2} F & \theta_{3} g
\end{bmatrix}\right\|_{F} ,
\end{equation}
where $\|\cdot \|_F$ is Frobenius norm,  $(E,f,F,g)\in S_{\ILSE}(y)$ and $\theta_{1},\, \theta_{2}$ and  $ \theta_{3} $ are positive parameters to  give the weights to  $ f,\, F $ and $g$, respectively.  However it seems that it is difficult to derive the explicit expression of $\mu_{\ILSE}$ because of the non-linearity of \eqref{eq:pert1} with respect to the perturbations of $E,\, F,\, f$  and $g $.  In the following we will deduce the linearize estimate for $\mu_{\ILSE}$ via linearizing \eqref{eq:pert1} by dropping the higher order terms of the perturbations $E,\, F,\, f$  and $g $ in \eqref{eq:pert1}. 

First, we rewrite \eqref{eq:pert1} as follows: 
\begin{equation}\label{eq:pp}
J(\xi)\begin{bmatrix}
\vect(E)\\\theta_{1}f\\\theta_{2}\vect(F)\\\theta_{3}g
\end{bmatrix}=\begin{bmatrix}
B^{\top }\xi-A^{\top }\Sigma_{pq}r_y\\d-By
\end{bmatrix}-\begin{bmatrix}
E^\top \Sigma_{pq}\begin{bmatrix}
E&\theta_{1}f
\end{bmatrix}\begin{bmatrix}
-y\\\theta_{1}^{-1}
\end{bmatrix}\\0
\end{bmatrix},
\end{equation}
where $\vect(A)$ stacks the columns of $A$ one by one, 
\[J(\xi)=\begin{bmatrix}
I_{n}\otimes (r_y^{\top }\Sigma_{pq})-A^{\top }\Sigma_{pq}(y^{\top }\otimes I_{m})&\theta_{1}^{-1}A^{\top }\Sigma_{pq}&-\theta_{2}^{-1}(I_{n}\otimes \xi^{\top })&0\\0&0&\theta_{2}^{-1}(y^{\top }\otimes I_{s})& -\theta_{3}^{-1}I_{s}
\end{bmatrix},\] 
	the symbol $\otimes$ is Kronecker product,  $I_n$ denotes the $n\times n$ identity matrix and $ r_y=b-Ay $. Suppose  $r_y\neq0$, it is easy to verify that for any vector  $ \xi \in \R^{s} $, the matrix $ J(\xi) $ is full row rank.  Let 
$$ \tau(\xi)=\|J(\xi)^{\dagger}\|_{2},
\quad  \rho(\xi)=\left\|J(\xi)^{\dagger}\begin{bmatrix}
B^{\top }\xi-A^{\top }\Sigma_{pq}r_y\\d-By
\end{bmatrix}\right\|_{2},
	$$
where  $\|\cdot \|_2$ is the spectral norm of a matrix or 2-norm of a vector, and $A^\dagger$ is Moore-Penrose inverse of $A$. From the equation below
\begin{equation}\label{eq:qq}
\|J(\xi)^{\dagger}\|_{2}\leq\tau_{0}:=\left\|\begin{bmatrix}
I_{n}\otimes (r_y^{\top }\Sigma_{pq})-A^{\top }\Sigma_{pq}(y^{\top }\otimes I_{m})&\theta_{1}^{-1}A^\top \Sigma_{pq}&0\\0&0&-\theta_{3}^{-1}I_{s}
\end{bmatrix}^{\dagger}\right\|_{2},
\end{equation}
we know that   $\rho(\xi)$  is continuous with respect to $\xi$. We define the linearized estimate 
	\begin{equation}\label{eq:linear}
		\rho=\min_{\xi}\rho(\xi)
	\end{equation}
for $\mu_{\ILSE}$. In the following theorem, we prove that $\rho$ is an upper bound for $\mu_\ILSE $. 

\begin{theorem}\label{Th:uppbound}
If  $4\tau_{0}\rho\sqrt{\theta_{1}^{-2}+\|y\|_{2}^{2}}<1 $, we have $\mu_{\ILSE}<2\rho$.
\end{theorem}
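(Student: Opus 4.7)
The plan is to recast the perturbation equation \eqref{eq:pp} as a fixed-point equation and invoke Brouwer's fixed-point theorem on the closed ball of radius $2\rho$ in the space of weighted perturbations. Stacking the unknowns as
$$p := \bigl[\vect(E)^\top,\ \theta_1 f^\top,\ \theta_2 \vect(F)^\top,\ \theta_3 g^\top\bigr]^\top,$$
one observes that $\|p\|_2$ coincides exactly with the Frobenius norm in \eqref{eq:back}, so producing any fixed point of radius strictly less than $2\rho$ in the corresponding map forces $\mu_{\ILSE}<2\rho$.

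First I would fix a $\xi^\ast$ attaining the minimum in \eqref{eq:linear} (existence follows from continuity of $\rho(\xi)$ together with coercivity $\rho(\xi)\to\infty$ as $\|\xi\|_2\to\infty$, using $\rank(B)=s$; failing that, an $\varepsilon$-minimizer/limit argument suffices). Then I would define
$$T(p) := J(\xi^\ast)^{\dagger}\left(\begin{bmatrix} B^\top \xi^\ast - A^\top \Sigma_{pq} r_y \\ d - By \end{bmatrix} - \begin{bmatrix} E^\top \Sigma_{pq}\bigl[E\ \ \theta_1 f\bigr]\begin{bmatrix} -y \\ \theta_1^{-1} \end{bmatrix} \\ 0 \end{bmatrix}\right).$$
By the construction of \eqref{eq:pp}, a fixed point $p^\ast=T(p^\ast)$ corresponds to a perturbation tuple $(E^\ast,f^\ast,F^\ast,g^\ast)$ which, together with the Lagrange multiplier $\xi^\ast$, satisfies the defining equations of $S_{\ILSE}(y)$, so $\mu_{\ILSE}\le\|p^\ast\|_2$.

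The next step is to estimate $\|T(p)\|_2$ on the closed ball $\{p:\|p\|_2\le 2\rho\}$. The ``constant'' contribution has norm exactly $\rho(\xi^\ast)=\rho$. For the quadratic residual, $\|\Sigma_{pq}\|_2=1$ together with submultiplicativity of the spectral norm and the elementary bounds $\|E\|_F\le\|p\|_2$, $\bigl\|[E\ \theta_1 f]\bigr\|_F\le\|p\|_2$ yields
$$\left\|E^\top \Sigma_{pq}\bigl[E\ \ \theta_1 f\bigr]\begin{bmatrix} -y \\ \theta_1^{-1} \end{bmatrix}\right\|_2 \le \|p\|_2^2\sqrt{\theta_1^{-2}+\|y\|_2^2}.$$
Multiplying by $\|J(\xi^\ast)^{\dagger}\|_2\le\tau_0$ from \eqref{eq:qq} and restricting to $\|p\|_2\le 2\rho$ gives
$$\|T(p)\|_2 \le \rho + 4\tau_0\rho^2\sqrt{\theta_1^{-2}+\|y\|_2^2}<2\rho,$$
where the final strict inequality is exactly the stated hypothesis. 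Continuity of $T$, strict self-mapping of the closed ball, and Brouwer's theorem then produce a fixed point $p^\ast$ with $\|p^\ast\|_2=\|T(p^\ast)\|_2<2\rho$, proving the claim.

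The main obstacle I anticipate is the clean bookkeeping of the quadratic residual so that the factor $\sqrt{\theta_1^{-2}+\|y\|_2^2}$ emerges exactly as in the hypothesis; the decisive step is splitting the residual into the left factor $E^\top\Sigma_{pq}$ (spectral norm $\le\|p\|_2$), the middle factor $[E\ \theta_1 f]$ (spectral norm $\le\|p\|_2$), and the column $[-y^\top,\theta_1^{-1}]^\top$ of $2$-norm $\sqrt{\theta_1^{-2}+\|y\|_2^2}$, so that the constant $4$ in the hypothesis matches the $2\rho\cdot 2\rho$ arising from the ball. A minor technicality is the existence of the minimizer $\xi^\ast$, which is handled either by coercivity or by an $\varepsilon$-minimizer passage.
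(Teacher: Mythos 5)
Your proposal is correct and follows essentially the same route as the paper: both fix a (near-)minimizing $\xi$, recast the perturbation equations as a fixed-point problem for the same quadratic map, bound the quadratic residual by $\tau_{0}\sqrt{\theta_{1}^{-2}+\|y\|_{2}^{2}}\,\|p\|_{2}^{2}$, and invoke Brouwer's theorem on a closed ball. The only cosmetic differences are that the paper works on the tighter ball of radius $\rho_{1}=2\rho\big/\bigl(1+\sqrt{1-4\tau_{0}\rho\sqrt{\theta_{1}^{-2}+\|y\|_{2}^{2}}}\bigr)$ rather than $2\rho$, and it simply asserts the existence of the minimizer $\xi_{0}$, a point you rightly flag and handle with an $\varepsilon$-minimizer fallback.
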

\begin{proof}
Suppose $ \xi_{0}\in \R^{s}$  such that $\rho=\rho(\xi_{0})$. Consider the following nonlinear system: 
$$
J(\xi_{0})\begin{bmatrix}\vect(E)\\\theta_{1}f\\\theta_{2}\vect(F)\\\theta_{3}g\end{bmatrix}=\begin{bmatrix}B^{\top }\xi_{0}-A^{\top }\Sigma_{pq}r_y\\d-By\end{bmatrix}-\begin{bmatrix}
E^{\top }\Sigma_{pq}\begin{bmatrix}
E&\theta_{1}f
\end{bmatrix}\begin{bmatrix}
-y\\\theta_{1}^{-1}
\end{bmatrix}\\0
\end{bmatrix}.
$$
and the mapping $\Gamma:\, \R^{(n+1)(m+s)}\longrightarrow \R^{(n+1)(m+s)}$  defined by
	\begin{equation}\label{eq:map}
\Gamma\begin{bmatrix}\vect(E)\\\theta_{1}f\\\theta_{2}\vect(F)\\\theta_{3}g\end{bmatrix}
=J(\xi_{0})^{\dagger}\begin{bmatrix}B^{\top }\xi_{0}-A^{\top }\Sigma_{pq}r_y\cr d-By\end{bmatrix}-J(\xi_{0})^{\dagger}\begin{bmatrix}
E^{\top }\Sigma_{pq}\begin{bmatrix}
E&\theta_{1}f
\end{bmatrix}\begin{bmatrix}
-y\\\theta_{1}^{-1}
\end{bmatrix}\\0
\end{bmatrix}.
\end{equation}
From $J(\xi_{0})J(\xi_{0})^{\dagger}=I $, we know that any  fixed point of  $\Gamma$ is a solution to  \eqref{eq:map}. Let
		 $$
 \rho_{1}=\frac{2\rho}{1+\sqrt{1-4\sqrt{\theta_{1}^{-2}+\|y\|^{2}}\tau_{0}\rho}}, \quad   S_{2}=\left \{z\in \R^{(n+1)(m+s)}\mid \|z\|_2\leq\rho_{1}\right \},
 $$
then $ S_{2} $ is a convex and closed set of $\R^{(n+1)(m+s)}$. Moreover,  for arbitrary 
$ z =\begin{bmatrix}\vect(E)\\\theta_{1}f\\\theta_{2}\vect(F)\\\theta_{3}g\end{bmatrix}\in S_{2}$, we can deduce that
		 \[\|\Gamma z\|_{2}\leq \rho+\tau_{0}\sqrt{\theta_{1}^{-2}\|y\|_{2}^{2}}\|z\|_{2}^{2}\leq \rho_{1}, \]
which means that the continuous mapping  $\Gamma$ maps $ S_{2}$ to  $ S_{2}$. From Brouwer fixed point principle,  the mapping $\Gamma$ has a fixed point in $ S_{2}$, then we prove that
 \[\mu_{\ILSE}\leq\rho_{1}\leq2\rho.\]  \end{proof}
 \qed

In the following, we will consider how to estimate $\rho $, because it is not easy to derive the explicit expression for $\rho$. From \eqref{eq:qq}, we arrive at
		 \begin{equation}\label{eq:mm}
\rho(\xi)\leq \tau_{0}\left\|\begin{bmatrix}B^{\top }\xi_{0}-A^{\top }\Sigma_{pq}r_y\cr  d-By\end{bmatrix}\right\|_{2}.
\end{equation}
Apparently,   the minimal value of the upper bound in \eqref{eq:mm}  is attainable at 
		 \begin{equation}\label{eq:xi1}
		 	\xi_{1}=(B^{\top })^{\dagger}A^{\top }\Sigma_{pq}r_y.
		 \end{equation}
 We have $\rho\leq\rho(\xi_{1})$. From the above deduction, if  $4\tau_{0}\rho\sqrt{\theta_{1}^{-2}+\|y\|_{2}^{2}}<1$, it is not difficult to see that
		 	$$
		 	\mu_{\ILSE}<2\rho(\xi_{1}).
		 	$$

On the other hand, we  find the lower bound for  $\mu_{\ILSE}$ in the following theorem. 
 \begin{theorem}\label{Th:lower}
If $ r_y\neq0 $, then $$\mu_{\ILSE}\geq\frac{2\rho}{1+\sqrt{1+4\tau_{0}\sqrt{\theta_{1}^{-2}+\|y\|_{2}^{2}}\rho}}.$$
\end{theorem}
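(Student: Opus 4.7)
The plan is to take an arbitrary admissible perturbation $(E,f,F,g)\in S_{\ILSE}(y)$, apply $J(\xi)^{\dagger}$ to both sides of the exact identity \eqref{eq:pp}, and then extract a scalar quadratic inequality whose positive root produces the announced lower bound. Writing $\mathbf{z}$ for the stacked vector $[\vect(E);\theta_{1}f;\theta_{2}\vect(F);\theta_{3}g]$ on the left of \eqref{eq:pp}, the hypothesis $r_y\neq 0$ ensures that $J(\xi)$ has full row rank, so $J(\xi)^{\dagger}J(\xi)$ is an orthogonal projection and $\|J(\xi)^{\dagger}J(\xi)\mathbf{z}\|_{2}\leq\|\mathbf{z}\|_{2}$. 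The triangle inequality applied to the image of \eqref{eq:pp} under $J(\xi)^{\dagger}$ therefore yields
$$\rho(\xi)\leq\|\mathbf{z}\|_{2}+\|J(\xi)^{\dagger}\|_{2}\,\|\mathbf{q}\|_{2},$$
where $\mathbf{q}$ denotes the quadratic correction appearing on the right-hand side of \eqref{eq:pp}.

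Next I would control both factors in the remainder term. From \eqref{eq:qq} the uniform bound $\|J(\xi)^{\dagger}\|_{2}\leq\tau_{0}$ is already available. For the nonlinear block $\mathbf{q}$, the idea is to use $\|\Sigma_{pq}\|_{2}=1$, submultiplicativity of the spectral norm, and $\|\cdot\|_{2}\leq\|\cdot\|_{F}$ to obtain
$$\|\mathbf{q}\|_{2}\leq\|E\|_{F}\,\bigl\|[E,\,\theta_{1}f]\bigr\|_{F}\,\sqrt{\|y\|_{2}^{2}+\theta_{1}^{-2}}\leq\sqrt{\theta_{1}^{-2}+\|y\|_{2}^{2}}\,\|\mathbf{z}\|_{2}^{2},$$
since $\|E\|_{F}^{2}$ and $\|[E,\,\theta_{1}f]\|_{F}^{2}$ are each bounded by $\|\mathbf{z}\|_{2}^{2}$. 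Letting $\mu=\|\mathbf{z}\|_{2}$, which coincides with the Frobenius norm being minimized in \eqref{eq:back}, and using $\rho=\min_{\xi}\rho(\xi)\leq\rho(\xi)$, the two estimates combine into the scalar quadratic inequality
$$\rho\leq\mu+\tau_{0}\sqrt{\theta_{1}^{-2}+\|y\|_{2}^{2}}\,\mu^{2}.$$

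The final step is to solve this quadratic for $\mu$; the positive root, after rationalizing the numerator, takes the form $\mu\geq 2\rho/\bigl(1+\sqrt{1+4\tau_{0}\sqrt{\theta_{1}^{-2}+\|y\|_{2}^{2}}\,\rho}\bigr)$, and taking the infimum over $(E,f,F,g)\in S_{\ILSE}(y)$ delivers the announced lower bound on $\mu_{\ILSE}$. The main obstacle I foresee is pinning down the exact constant $\sqrt{\theta_{1}^{-2}+\|y\|_{2}^{2}}$ in the bound on $\|\mathbf{q}\|_{2}$, because it must match the factor already appearing in the hypothesis of Theorem~\ref{Th:uppbound} so that the resulting upper and lower bounds for $\mu_{\ILSE}$ are symmetric and the linearization $\rho$ is tight up to an absolute constant. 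By contrast, the passage from $\rho(\xi)$ to $\min_{\xi}\rho(\xi)$ is essentially free, since the right-hand side of the quadratic does not involve $\xi$.
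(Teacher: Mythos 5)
Your proposal is correct and follows essentially the same route as the paper: apply $J(\xi)^{\dagger}$ to \eqref{eq:pp}, use $\|J(\xi)^{\dagger}J(\xi)\|_{2}=1$ together with $\|J(\xi)^{\dagger}\|_{2}\leq\tau_{0}$ and the bound $\|\mathbf{q}\|_{2}\leq\sqrt{\theta_{1}^{-2}+\|y\|_{2}^{2}}\,\lambda^{2}$ to obtain the quadratic inequality $\rho(\xi)\leq\lambda+\tau_{0}\sqrt{\theta_{1}^{-2}+\|y\|_{2}^{2}}\,\lambda^{2}$, and then invert it. The only cosmetic difference is that the paper solves the quadratic for $\lambda$ in terms of $\rho(\xi)$ first and then invokes the monotonicity of $t\mapsto 2t/\bigl(1+\sqrt{1+4\tau_{0}\sqrt{\theta_{1}^{-2}+\|y\|_{2}^{2}}\,t}\bigr)$ to pass to $\rho=\min_{\xi}\rho(\xi)$, whereas you substitute $\rho\leq\rho(\xi)$ before solving; these are equivalent.
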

		 	
\begin{proof}
	For the following nonlinear system: \begin{equation}\label{eq:nn}
 J(\xi)^{\dagger}J(\xi)\begin{bmatrix}\vect(E)\\\theta_{1}f\\\theta_{2}\vect(F)\\\theta_{3}g\end{bmatrix}
 =J(\xi)^{\dagger}\begin{bmatrix}B^{\top }\xi_{0}-A^{\top }\Sigma_{pq}r_y\cr d-By\end{bmatrix}-
 J(\xi)^{\dagger}\begin{bmatrix}
E^{\top }\Sigma_{pq}\begin{bmatrix}
E&\theta_{1}f
\end{bmatrix}\begin{bmatrix}
-y\\\theta_{1}^{-1}
\end{bmatrix}\\0
\end{bmatrix},
\end{equation}
we know that any solution to \eqref{eq:pp} is also a solution to \eqref{eq:nn}. Because  $ J(\xi)$ is full row rank, any solution to \eqref{eq:nn} is a solution to \eqref{eq:pp}. From  $ \|J(\xi)^{\dagger}J(\xi)\|_2=1 $ and \eqref{eq:nn}, we can prove that
		 	\[ \rho(\xi)\leq\lambda+\tau_{0}\sqrt{\theta_{1}^{-2}+\|y\|_{2}^{2}}\lambda^{2},\]
where $\lambda=\left\|\begin{bmatrix}E&\theta_{1}f\\\theta_{2}F&\theta_{3}g\end{bmatrix}\right\|_{F} $. Then, we deduce that
\[\lambda\geq\frac{2\rho(\xi)}{1+\sqrt{1+4\tau_{0}\sqrt{\theta_{1}^{-2}+\|y\|_{2}^{2}}\rho(\xi)}}.\]
Because the function $ f(t)\equiv\left[2t/(1+\sqrt{1+4\tau_{0}\sqrt{\theta_{1}^{-2}+\|y\|_{2}^{2}}t}\right] $ is increasing with respect to  $t \,(t\geq 0)$. we prove this theorem. 
\end{proof}\qed 

Combing Theorems \ref{Th:uppbound} and \ref{Th:lower}, we have the following corollary.
\begin{corollary}
If $4\tau_{0}\rho\sqrt{\theta_{1}^{-2}+\|y\|_{2}^{2}}<1 $, then $\frac{2\rho}{1+\sqrt{2}}\leq\mu_{\ILSE}\leq2\rho$. 
\end{corollary}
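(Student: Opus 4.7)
The plan is to obtain both inequalities by directly invoking the two theorems just proved, with the hypothesis $4\tau_{0}\rho\sqrt{\theta_{1}^{-2}+\|y\|_{2}^{2}}<1$ being used to simplify the denominator in the lower bound from Theorem \ref{Th:lower}.

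First, the upper bound $\mu_{\ILSE}\leq 2\rho$ is immediate from Theorem \ref{Th:uppbound}, since the assumed inequality is exactly the hypothesis of that theorem.

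Second, for the lower bound, Theorem \ref{Th:lower} gives
\[
\mu_{\ILSE}\geq\frac{2\rho}{1+\sqrt{1+4\tau_{0}\sqrt{\theta_{1}^{-2}+\|y\|_{2}^{2}}\,\rho}}.
\]
The hypothesis $4\tau_{0}\sqrt{\theta_{1}^{-2}+\|y\|_{2}^{2}}\,\rho<1$ yields $\sqrt{1+4\tau_{0}\sqrt{\theta_{1}^{-2}+\|y\|_{2}^{2}}\,\rho}<\sqrt{2}$, and hence the denominator above is strictly less than $1+\sqrt{2}$. Substituting this bound produces $\mu_{\ILSE}\geq\frac{2\rho}{1+\sqrt{2}}$, which is the desired inequality.

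There is no real obstacle here; the corollary is simply the conjunction of Theorems \ref{Th:uppbound} and \ref{Th:lower} combined with the elementary monotonicity observation on the denominator. The only thing worth noting is that the hypothesis of Theorem \ref{Th:uppbound} is used in two distinct roles: it enables the Brouwer fixed point argument that gives the upper bound, and it controls the square root in the lower bound so that the clean constant $1+\sqrt{2}$ appears.
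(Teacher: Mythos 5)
Your proof is correct and is exactly the argument the paper intends, since the paper merely states that the corollary follows by combining Theorems \ref{Th:uppbound} and \ref{Th:lower}; your explicit bounding of the denominator by $1+\sqrt{2}$ fills in the only detail. (The hypothesis $r_y\neq 0$ of Theorem \ref{Th:lower} is tacitly assumed here, as it is in the paper.)
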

The above corollary indicates that when   $\rho$ is small enough,  then $\rho$ is a good estimation for  $\mu_{\ILSE}$. On the contrary, the next result shows that if $\rho$ is not small, then $y$ cannot be a good approximate solution.

\begin{theorem}
Supoose $y$ is an approximation solution to \eqref{eq:ILSE}, and $x_{\ILSE}$ is its exact solution, then the following inequality 
$$\|x_{\ILSE}-y\|_2\geq \frac{1}{\left\|\begin{bmatrix}A^{\top }\Sigma_{pq}A\\B\end{bmatrix}\right\|_2 } \left\|\begin{bmatrix}B\xi_1-A^{\top }\Sigma_{pq}r_y\cr d-By\end{bmatrix}\right\|_2 $$
holds.
\end{theorem}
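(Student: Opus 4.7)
The plan is to derive the bound directly by comparing the normal equations satisfied by $x_{\ILSE}$ with the residual quantities evaluated at the approximate solution $y$, and then to exploit that $\xi_1$ minimises a certain $2$-norm. Let $\xi_\star\in\R^s$ denote the Lagrange multiplier associated with the exact solution $x_{\ILSE}$, so that by \eqref{eq:normal}
\[
A^\top\Sigma_{pq}(b-Ax_{\ILSE})=B^\top\xi_\star,\qquad Bx_{\ILSE}=d.
\]
Writing $r_y=b-Ay=(b-Ax_{\ILSE})+A(x_{\ILSE}-y)$ and substituting into $A^\top\Sigma_{pq}r_y$, subtraction produces the two identities
\[
A^\top\Sigma_{pq}A(x_{\ILSE}-y)=A^\top\Sigma_{pq}r_y-B^\top\xi_\star,\qquad B(x_{\ILSE}-y)=d-By.
\]

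Stacking these two block rows and applying the spectral-norm inequality $\|Mv\|_2\leq\|M\|_2\,\|v\|_2$ yields
\[
\left\|\begin{bmatrix}A^\top\Sigma_{pq}r_y-B^\top\xi_\star\\ d-By\end{bmatrix}\right\|_2\leq\left\|\begin{bmatrix}A^\top\Sigma_{pq}A\\ B\end{bmatrix}\right\|_2\,\|x_{\ILSE}-y\|_2.
\]
To replace the unknown multiplier $\xi_\star$ by $\xi_1=(B^\top)^\dagger A^\top\Sigma_{pq}r_y$ appearing on the right-hand side of the claim, I would use that $\xi_1$ is the least-squares solution of $B^\top\xi=A^\top\Sigma_{pq}r_y$, hence
\[
\|B^\top\xi_1-A^\top\Sigma_{pq}r_y\|_2\leq\|B^\top\xi_\star-A^\top\Sigma_{pq}r_y\|_2.
\]
Because the second block $d-By$ is independent of the choice of Lagrange multiplier, the same inequality extends immediately to the stacked $2$-norm, and dividing through by $\|[A^\top\Sigma_{pq}A;\, B]\|_2$ gives the desired lower bound.

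In contrast to Theorems~\ref{Th:uppbound} and~\ref{Th:lower}, no fixed-point or contraction argument is needed here, so the reasoning is essentially elementary. The only real subtlety is notational: as written the statement contains $B\xi_1$, which is dimensionally inconsistent since $\xi_1\in\R^s$ and $B\in\R^{s\times n}$, so I interpret it as $B^\top\xi_1$ throughout; with that reading the argument reduces cleanly to the optimality of $\xi_1$ together with the submultiplicative property of the spectral norm.
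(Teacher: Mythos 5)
Your proof is correct and follows essentially the same route as the paper: subtract the normal equations at $x_{\ILSE}$ from the residual quantities at $y$, stack the two blocks, and bound by the spectral norm of $\bigl[\,A^{\top}\Sigma_{pq}A;\,B\,\bigr]$. You additionally make explicit the least-squares optimality of $\xi_{1}$ that justifies replacing the unknown multiplier by $\xi_{1}$ (a step the paper's proof leaves implicit), and you are right that $B\xi_{1}$ in the statement should read $B^{\top}\xi_{1}$.
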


\begin{proof} Since $x_{\ILSE}$ is the exact solution to \eqref{eq:ILSE}, there exits a vector $\xi_{2}\in \R^{s}$ such that
$$
A^{\top }\Sigma_{pq}(b-Ax_{\ILSE})=B^{\top }\xi_{2},\quad Bx_{\ILSE}=d .
$$
Let $r_{1}=B^{\top }\xi_{2}-A^{\top }\Sigma_{pq}r_y$, and $r_{2}=d-By$, then 
$$
r_{1}=A^{\top }\Sigma_{pq}A(y-x_{\ILSE}),\quad r_{2}=-B(y-x_{\ILSE}).$$
Then
\[\left\|\begin{bmatrix}A^{\top }\Sigma_{pq}A\\B\end{bmatrix}\right\|_2 \|y-x_{\ILSE}\|_2  \geq \left\|\begin{bmatrix}r_{1}\\r_{2}\end{bmatrix}\right\|_2                        \geq\left\|\begin{bmatrix}B\xi_{1}-A^{\top }\Sigma_{pq}r_y \cr d-By\end{bmatrix}\right\|_2,\]
which completes the proof of this theorem.
\end{proof}\qed

Next, we analyze $\tau_o$ in \eqref{eq:qq}. From the definition of $\tau_0$, we have the following result.
\begin{theorem}
	With the notations above, we have $\tau_{0}=\max  \left  \{\theta_3,\, \alpha^{-1} \right\}$, where $$\alpha=\sigma_{\min  }\left(\begin{bmatrix}I_{n}\otimes (r_y^{\top }\Sigma_{pq})-A^{\top }\Sigma_{pq}(y^{\top }\otimes I_{m})&\theta_{1}^{-1}A^\top \Sigma_{pq} \end{bmatrix}\right).$$
\end{theorem}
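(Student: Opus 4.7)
The plan is to exploit the sparsity pattern of the matrix whose pseudoinverse appears in the definition of $\tau_0$. Its columns split into three groups corresponding to the perturbation variables: those tied to $\vect(E)$ and $\theta_1 f$, the columns tied to $\theta_2\vect(F)$ (which are identically zero in \eqref{eq:qq}), and the column block tied to $\theta_3 g$. I would first observe that the all-zero middle columns can be stripped without affecting the spectral norm of the pseudoinverse: if a matrix is written as $[P\mid 0]$, its Moore--Penrose inverse is obtained from $P^\dagger$ by appending zero rows, and the two have the same $2$-norm. After this reduction (and a harmless column permutation), the matrix becomes the block diagonal
$$M=\begin{bmatrix} N & 0 \\ 0 & -\theta_3^{-1}I_s\end{bmatrix},\qquad N = \begin{bmatrix}I_{n}\otimes (r_y^{\top }\Sigma_{pq})-A^{\top }\Sigma_{pq}(y^{\top }\otimes I_{m}) & \theta_{1}^{-1}A^{\top }\Sigma_{pq}\end{bmatrix}.$$

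Next I would invoke two standard facts from matrix analysis. First, the Moore--Penrose inverse of a block-diagonal matrix with well-defined blocks is itself block diagonal, with the blockwise pseudoinverses on the diagonal; this is verified by checking the four Moore--Penrose conditions blockwise. Second, the spectral norm of a block-diagonal matrix equals the maximum of the spectral norms of its diagonal blocks. Applied to $M$, and noting that the pseudoinverse of the scalar multiple of the identity $-\theta_3^{-1}I_s$ is $-\theta_3 I_s$, this gives
$$\tau_0 \;=\; \left\|\begin{bmatrix} N^\dagger & 0 \\ 0 & -\theta_3 I_s \end{bmatrix}\right\|_2 \;=\; \max\bigl\{\|N^\dagger\|_2,\; \theta_3\bigr\}.$$

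Finally, I would argue that $N$ has full row rank so that $\|N^\dagger\|_2 = 1/\sigma_{\min}(N) = \alpha^{-1}$. This is essentially the observation already invoked in the excerpt to conclude that $J(\xi)$ is full row rank: whenever $r_y\neq 0$, the block $I_n\otimes(r_y^{\top}\Sigma_{pq})$ sitting inside $N$ already has full row rank $n$ (since $\Sigma_{pq}$ is nonsingular and $r_y\neq 0$), and therefore so does $N$. Combining the three steps yields the claimed identity $\tau_0 = \max\{\theta_3,\alpha^{-1}\}$.

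The main obstacle is really only the bookkeeping: one must cleanly justify that (a) appending zero columns leaves the $2$-norm of the pseudoinverse unchanged, and (b) the block-diagonal pseudoinverse formula applies to the present $M$. Both are elementary consequences of the four Moore--Penrose conditions, and no delicate numerical estimates are required once these reductions are in place.
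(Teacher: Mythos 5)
Your proof is correct, and it supplies exactly the argument the paper omits: the paper states this theorem with no proof at all, asserting only that it follows ``from the definition of $\tau_0$.'' The route you take --- block-diagonal Moore--Penrose inverse plus the fact that the spectral norm of a block-diagonal matrix is the maximum of the blockwise norms, together with $(-\theta_3^{-1}I_s)^\dagger=-\theta_3 I_s$ and $\|N^\dagger\|_2=\sigma_{\min}(N)^{-1}$ for full-row-rank $N$ --- is surely the intended one. Two small points. First, the matrix in \eqref{eq:qq} has the $\theta_2\vect(F)$ column block \emph{deleted}, not zeroed out (it has $mn+m+s$ columns, versus $mn+m+ns+s$ for $J(\xi)$), so your column-stripping step is vacuous; it is harmless, and the $[P\mid 0]^\dagger$ identity you cite is correct anyway. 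Second, your justification that $N$ has full row rank is imprecise: $I_n\otimes(r_y^\top\Sigma_{pq})$ is a \emph{summand} of the first column block of $N$, not a column sub-block, so full row rank of that Kronecker factor does not by itself transfer to $N$. The clean argument uses the second column block: if $v^\top N=0$ then $v^\top A^\top\Sigma_{pq}=0$, hence $Av=0$ (as $\Sigma_{pq}$ is nonsingular), hence $v^\top A^\top\Sigma_{pq}(y^\top\otimes I_m)=0$, and the remaining condition $v^\top\bigl(I_n\otimes(r_y^\top\Sigma_{pq})\bigr)=0$ forces $v_i\,r_y^\top\Sigma_{pq}=0$ for each $i$, so $v=0$ when $r_y\neq 0$. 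With that one-line repair the proof is complete.
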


The next theorem gives a lower bound of $\alpha$ and thus an upper bound of $\tau_0$.

\begin{theorem}
	If $r_y \neq 0$, then $\alpha \geq  \dfrac{\|r_y\|_2}{\sqrt{1+\theta_1^{2}\|y\|_2^2} }.
$
\end{theorem}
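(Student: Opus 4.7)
The plan is to exploit the dual variational characterization
\[
\alpha = \sigma_{\min}(M) = \min_{\|v\|_2 = 1}\|M^{\top} v\|_2,
\]
where $M$ denotes the $n \times n(m{+}1)$ matrix appearing inside the singular-value expression. Since $M$ has more columns than rows, it is natural to work with $\|M^{\top}v\|_2$ rather than $\|Mv\|_2$. Fixing an arbitrary unit vector $v \in \R^n$, I will construct a vector $w$ (depending on $v$) with $Mw = v$; then the inequality $\|M^{\top} v\|_2 \geq |v^{\top} M w|/\|w\|_2 = 1/\|w\|_2$ will translate a norm bound on $w$ into the desired lower bound on $\|M^{\top}v\|_2$, uniformly in $v$.

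To choose $w$, I would write $w = \bigl[\vect(Z_1)^{\top},\, z_2^{\top}\bigr]^{\top}$ with $Z_1 \in \R^{m\times n}$ and $z_2\in\R^m$. Using the standard Kronecker identities $(I_n \otimes (r_y^{\top}\Sigma_{pq}))\vect(Z_1) = Z_1^{\top} \Sigma_{pq} r_y$ and $(y^{\top} \otimes I_m)\vect(Z_1) = Z_1 y$, the product $Mw$ becomes the three-term sum $Z_1^{\top}\Sigma_{pq}r_y - A^{\top}\Sigma_{pq}Z_1 y + \theta_1^{-1}A^{\top}\Sigma_{pq}z_2$. The guiding idea is to pick $z_2 = \theta_1 Z_1 y$, which makes the second and third terms cancel exactly, leaving only $Z_1^{\top}\Sigma_{pq}r_y$. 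Then the rank-one choice
\[
Z_1 = \frac{(\Sigma_{pq}\, r_y)\, v^{\top}}{\|r_y\|_2^2}
\]
(well defined because $r_y \neq 0$) produces $Mw = v$ after invoking $\Sigma_{pq}^2 = I_m$.

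Finally I would bound $\|w\|_2^2 = \|Z_1\|_F^2 + \|z_2\|_2^2$. A direct computation gives $\|Z_1\|_F^2 = 1/\|r_y\|_2^2$ and $\|z_2\|_2^2 = \theta_1^2 (v^{\top} y)^2 /\|r_y\|_2^2 \leq \theta_1^2\|y\|_2^2/\|r_y\|_2^2$, where the last step is the Cauchy--Schwarz inequality together with $\|v\|_2=1$. Combining the two contributions yields $\|w\|_2 \leq \sqrt{1 + \theta_1^2 \|y\|_2^2}/\|r_y\|_2$, so $\|M^{\top} v\|_2 \geq \|r_y\|_2/\sqrt{1 + \theta_1^2 \|y\|_2^2}$ for every unit $v$; minimizing over the unit sphere gives the stated inequality.

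The main obstacle, and the only step that requires real care, is lining up the Kronecker-transposition identities correctly so that the three summands in $Mw$ appear in a form making the cancellation visible. Once that rewriting is in place, the construction of $(Z_1, z_2)$ so that $Mw = v$ is essentially forced, and the remaining norm estimate is routine.
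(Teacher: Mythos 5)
Your proof is correct, but it takes a genuinely different route from the paper's. The paper works directly with the Gram matrix $MM^{\top}$: it expands the product, completes the square to write $MM^{\top}$ as $(\theta_1^{-2}+\|y\|_2^2)$ times the sum of a positive semidefinite term $(A-r_yy_0^{\top})^{\top}(A-r_yy_0^{\top})$ and a remainder, discards the semidefinite term in the Loewner order, and reads off $\lambda_{\min}(MM^{\top})\ge \|r_y\|_2^2/(1+\theta_1^{2}\|y\|_2^2)$. You instead argue dually: for each unit vector $v$ you build an explicit preimage $w$ with $Mw=v$ (rank-one $Z_1$ proportional to $\Sigma_{pq}r_yv^{\top}$, with $z_2=\theta_1 Z_1y$ chosen so that the two $A$-dependent terms cancel exactly) and then use Cauchy--Schwarz to get $\|M^{\top}v\|_2\ge 1/\|w\|_2$; in effect you bound $\|M^{\dagger}\|_2$ from above by exhibiting feasible, not necessarily minimal-norm, solutions of $Mw=v$. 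Your Kronecker identities and norm computations all check out ($\|Z_1\|_F^2=1/\|r_y\|_2^2$, $\|z_2\|_2^2\le \theta_1^{2}\|y\|_2^2/\|r_y\|_2^2$), and both routes land on exactly the same constant. What your argument buys is transparency and economy: no completion of squares is needed, and it is visible that the bound comes entirely from the rank-one structure of the residual term once the $A$-dependence is cancelled. What the paper's computation buys is the full expression for $MM^{\top}$, which retains the discarded positive semidefinite term and could in principle be used to extract a sharper, $A$-dependent estimate. The only slip in your write-up is cosmetic: the matrix $M$ has size $n\times m(n+1)$, not $n\times n(m+1)$; this does not affect any step of the argument.
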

\begin{proof} Noting $\Sigma_{pq}^2=I_m$, since
\begin{align*}
	&\begin{bmatrix}I_{n}\otimes (r_y^{\top }\Sigma_{pq})-A^{\top }\Sigma_{pq}(y^{\top }\otimes I_{m})&\theta_{1}^{-1}A^\top \Sigma_{pq} \end{bmatrix}\,\begin{bmatrix}I_{n}\otimes (r_y^{\top }\Sigma_{pq})-A^{\top }\Sigma_{pq}(y^{\top }\otimes I_{m})&\theta_{1}^{-1}A^\top \Sigma_{pq} \end{bmatrix}^\top \\
	&=(\theta_1^{-2}+\|y\|_2^2)\left\{(A-r_yy_0)^\top (A-r_y y_0)+\frac{\|r_y\|_2^2}{\theta_1^{-2}+\|y\|_2^2} I_n- (r_yy_0)^\top (r_y y_0)\right\}\\
	&\geq (\theta_1^{-2}+\|y\|_2^2)\left\{\frac{\|r_y\|_2^2}{\theta_1^{-2}+\|y\|_2^2} I_n- (r_yy_0)^\top (r_y y_0)\right\},
\end{align*}
(here for two symmetric semi-positive matrix $M$ and $N$, $M\geq N$ means that $M-N$ is still semi positive), where $y_0=[1/(\theta_1^{-2}+\|y\|_2^2)] y$, we have
$$
\alpha^2 \geq  (\theta_1^{-2}+\|y\|_2^2)\left\{\frac{\|r_y\|_2^2}{\theta_1^{-2}+\|y\|_2^2} - \|r_y\|_2^2\|y_0\|_2^2 \right\}=\frac{\|r_y\|_2^2}{1+\theta_1^{2}\|y\|_2^2}.
$$
	
\end{proof}
\qed 

\section{Numerical examples}\label{sec:nume ex}

In this section we will test the effectiveness of the linearization estimate $\rho $ for the normwise backward error $\mu_{\ILSE}$ of ILSE \eqref{eq:ILSE}. All the
computations are carried out using \textsc{Matlab} 8.1 with the machine precision
$\epsilon=2.2 \times 10^{-16}$.

We adopt the method in \cite{MastronardiVanDooren2014BIT} to construct the data.  Let the matrix $A$, given $\kappa_A$, be generated as $A=QDU$, where $Q\in \R^{m\times m}$ is a $\Sigma_{pq}$-orthogonal matrix, i.e., such that $Q^\top \Sigma_{pq}Q=\Sigma_{pq}$, $D\in \R^{m\times n} $ is a diagonal matrix with decreasing diagonal values geometrically distributed between 1 and $\kappa_A$, and $U\in \R^{n\times n}$ is a random orthogonal matrix generated by the function ${\tt gallery} (\rq {\tt qumlt} \rq, n)$. Furthermore, $A$ is normalized such that $\|A\|_2=1$. The matrix $B\in \R^{s \times n}$, given its condition number $\kappa_B$, is formed by using \textsc{Matlab} routine $B={\tt gallery} (\rq {\tt randsvd} \rq,\, [s, \,n],\, \kappa_B)$ with $\|B\|_2=1$ and its singular values are geometrically distributed between $1$ and $1/\kappa_B$. We construct the random vectors $b$ and $d$ which are satisfied the standard Gaussian distribution for ILSE \eqref{eq:ILSE}.  For all the experiments, we choose $n = 50, \, s = 20,\, p = 60$, $q = 40$. For each generated data, we compute the solution via the augmented system \eqref{eq:aug}.  For the perturbations, we generate them as
\begin{equation*}\label{eq:pert}
\Delta A=\varepsilon \cdot \Delta A_{1},\quad \Delta B=\varepsilon \cdot \Delta B_{1}, \quad \Delta b=\varepsilon \cdot \Delta b_{1}\cdot  \|b\|_2, \quad \Delta d=\varepsilon \cdot \Delta d_{1}\cdot  \|d\|_2 
\end{equation*}
where each components of $\Delta A_1 \in \R^{m\times n}$, $\Delta B_1 \in \R^{s \times n}$, $\Delta b_1\in \R^{m}$ and $\Delta d_1\in \R^{s}$  satisfy the  standard Gaussian distribution. Let the computed solution $y$ be computed via solving the corresponding augmented system to the following perturbed ILSE problem
\begin{equation*}  
\min\left((b+\Delta b)-(A+\Delta A)y \right)^{\top}\Sigma_{pq}\left((b+\Delta b)-(A+\Delta A)y \right), \quad \mbox{ subject to }\quad (B+\Delta B) {y}=d+\Delta d.
\end{equation*}

For the computed solution $y$, its normwise backward error $\mu_\ILSE$  is defined by \eqref{eq:back}, and its linearization estimate $\rho$ for the normwise backward error $\mu_\ILSE$ is given by \eqref{eq:linear}. Because there is no explicit expression for $\rho$, we use $\rho(\xi_1)$ where $xi_1$ is given by \eqref{eq:xi1} to approximate $\mu_\ILSE$.  We always use the common choice  $\theta_1=\theta_2=\theta_3=1$ in \eqref{eq:back}. There is no explicit expression for the normwise backward error $\mu_\ILSE$. Since the perturbations $\Delta A$,  $\Delta B$, $\Delta b$ and $\Delta d$ are known in advance, we can calculate the following quantity $\mu_1$ to approximate $\mu_\ILSE$:
$$
 \mu_{1}= \left\|\begin{bmatrix}
\Delta A & \Delta  b\\
\Delta  B& \Delta d
\end{bmatrix}\right\|_{F} ,
 $$
 and compare $\mu_1$ with the linearization estimate $\rho(\xi_1)$ to show the effectiveness of $\rho(\xi_1)$. From the definition of the normwise backward error $\mu$ defined in \eqref{eq:back}, it is easy to see that $\mu \leq \mu_1$. Note that $\mu$ may be much smaller that $\mu_1$ because $\mu$ is the smallest perturbation magnitude over the set of all perturbations $S_{\ILSE}$. We test different choices of the perturbations magnitude $\varepsilon$  and the parameter $\kappa_A,\, \kappa_B$. We report the numerical values of $\mu_1$, $\rho(\xi_1)$ and the residual norms $\gamma$ and $\bar \gamma$ corresponding tho the original and perturbed augmented system in Table \ref{tab:b1}.
 
\begin{table}
  \caption{Comparisons between $\mu_1$ and $\rho(\xi_1)$.}
  \label{tab:b1}
  \centering
   \begin{tabular}{ccccccc}
   \hline
    $\varepsilon$& $ \kappa_A $ & $\kappa_B$ & $ \gamma$ & $ \bar \gamma$  & $ \mu_{1} $ & $ \rho(\xi_1)$\\
      \hline
 $10^{-6}$     &3.92e+01 & 1.00e+02 & 2.96e-08 & 3.28e-09 & 1.31e-04 & 3.68e-06   \\
&4.51e+02 & 1.00e+02 & 3.62e-08 & 4.64e-10 & 1.30e-04 & 4.79e-06   \\
&4.69e+04 & 1.00e+02 & 1.19e-07 & 5.35e-10 & 1.20e-04 & 3.53e-06   \\
&3.44e+08 & 1.00e+02 & 8.98e-07 & 6.04e-10 & 1.19e-04 & 8.70e-06   \\
&4.21e+01 & 1.00e+04 & 1.65e-08 & 1.76e-10 & 1.19e-04 & 5.56e-06   \\
&3.24e+02 & 1.00e+04 & 2.15e-08 & 3.65e-10 & 1.39e-04 & 5.03e-06   \\
&1.71e+04 & 1.00e+04 & 6.22e-08 & 4.02e-09 & 1.25e-04 & 5.08e-06   \\
&3.54e+08 & 1.00e+04 & 6.18e-07 & 6.23e-10 & 1.35e-04 & 5.80e-06   \\
&4.86e+01 & 1.00e+06 & 2.71e-08 & 3.78e-10 & 1.28e-04 & 4.51e-06   \\
&3.45e+02 & 1.00e+06 & 2.34e-08 & 4.89e-10 & 1.30e-04 & 4.25e-06   \\
&3.09e+04 & 1.00e+06 & 7.65e-08 & 5.27e-10 & 1.42e-04 & 4.58e-06   \\
&4.31e+08 & 1.00e+06 & 5.44e-07 & 1.22e-09 & 1.25e-04 & 4.15e-06   \\
&4.71e+01 & 1.00e+08 & 3.25e-08 & 2.12e-09 & 1.25e-04 & 3.56e-06   \\
&2.41e+02 & 1.00e+08 & 2.72e-08 & 2.15e-09 & 1.25e-04 & 4.54e-06   \\
&3.24e+04 & 1.00e+08 & 6.27e-08 & 1.93e-10 & 1.43e-04 & 3.44e-06   \\
&1.40e+08 & 1.00e+08 & 6.06e-07 & 8.98e-11 & 1.28e-04 & 5.63e-05 \\ 
\hline
$10^{-12}$ &  4.88e+01 & 1.00e+02 & 2.77e-08 & 2.60e-08 & 1.32e-10 & 7.90e-12   \\
&3.05e+02 & 1.00e+02 & 2.99e-08 & 2.53e-08 & 1.28e-10 & 8.41e-12   \\
&3.35e+04 & 1.00e+02 & 6.15e-08 & 4.90e-08 & 1.29e-10 & 8.52e-12   \\
&4.55e+08 & 1.00e+02 & 4.68e-07 & 4.65e-07 & 1.24e-10 & 3.39e-11   \\
&3.08e+01 & 1.00e+04 & 1.32e-08 & 1.48e-08 & 1.22e-10 & 7.98e-12   \\
&5.00e+02 & 1.00e+04 & 2.96e-08 & 3.33e-08 & 1.35e-10 & 8.69e-12   \\
&1.15e+04 & 1.00e+04 & 5.37e-08 & 8.57e-08 & 1.22e-10 & 1.09e-11   \\
&2.72e+08 & 1.00e+04 & 7.73e-07 & 6.62e-07 & 1.29e-10 & 9.02e-11   \\
&3.18e+01 & 1.00e+06 & 3.23e-08 & 3.04e-08 & 1.29e-10 & 7.88e-12   \\
&3.38e+02 & 1.00e+06 & 2.14e-08 & 1.91e-08 & 1.29e-10 & 1.06e-11  \\ 
&2.64e+04 & 1.00e+06 & 1.02e-07 & 8.75e-08 & 1.33e-10 & 1.26e-11  \\ 
&3.04e+08 & 1.00e+06 & 8.11e-07 & 6.82e-07 & 1.16e-10 & 1.09e-10  \\ 
&1.92e+01 & 1.00e+08 & 2.04e-08 & 2.42e-08 & 1.25e-10 & 4.20e-10  \\ 
&1.14e+02 & 1.00e+08 & 3.20e-08 & 3.37e-08 & 1.35e-10 & 7.51e-11  \\ 
&3.75e+04 & 1.00e+08 & 4.60e-08 & 6.50e-08 & 1.20e-10 & 2.73e-10  \\ 
&2.73e+08 & 1.00e+08 & 6.86e-07 & 4.71e-07 & 1.29e-10 & 1.25e-10   \\
      \hline
    \end{tabular}
\end{table}

From Table \ref{tab:b1}, it is observed that the residual norms $\gamma$ and $\bar \gamma $ are always small regardless of different choices of $\varepsilon$, $ \kappa_A $ and $\kappa_B$. Thus the solutions $x$ and $y$ are acceptable in the sense of the residual norms for the augmented system. The differences between $\mu_1$ and $\rho(\xi_1)$ are not too big. Most values of $\mu_1$ are one hundredfold of the corresponding of $\rho(\xi_1)$. However, we cannot conclude that $\rho(\xi_1)$ gives a bad estimation for $\mu_\ILSE$ because $\mu_\ILSE$ is the smallest perturbation magnitude to let the computed solution $y$ be the exact solution of the perturbed ILSE mathematically. The values of $\rho(\xi_1)$ are coincided with the perturbation  magnitude $\varepsilon$, which indicates the linearized estimation $\rho(\xi_1)$ is effective. 

\section{Concluding Remarks}\label{sec:con}

In this paper we studied the linearization estimate for the normwise backward error of  the equality constrained indefinite least squares problem. The explicit sub-optimal  linearization estimate is given. We tested the derived sub-optimal linearization estimate through numerical examples, which showed that it is reliable and effective.



\end{document}